\documentclass[12pt]{article}
\usepackage{amsmath,amssymb,amsthm, comment}

\newtheorem{theorem}{Theorem}[section]
\newtheorem{thmy}{Theorem}
 % "letter-numbered" theorems

\newtheorem{lemma}[theorem]{Lemma}

\def\barr{\begin{array}}
\def\earr{\end{array}}
\title{Arithmetic progressions in finite groups}
\author{Marius T\u arn\u auceanu}
\date{March 23, 2020}

\begin{document}

\maketitle

\begin{abstract}
In this paper, we describe the structure of finite groups whose element orders or proper (abelian) subgroup orders form an arithmetic progression of ratio $r\geq 2$. This extends the case $r=1$ studied in previous papers \cite{1,8,4}.
\end{abstract}

{\small
\noindent
{\bf MSC2000\,:} Primary 20D60; Secondary 20D99.

\noindent
{\bf Key words\,:} finite groups, ${\rm CP}_1$-groups, element orders, subgroup orders, arithmetic progressions.}
\vspace{-2mm}

\section{Introduction}

Given a finite group $G$, we denote by $\pi_e(G)$ the set of element orders of $G$, by $\pi_s(G)$ the set of proper subgroup orders of $G$ and by $\pi_{as}(G)$ the set of proper abelian subgroup orders of $G$. The starting point for our discussion is given by \cite{1,8,4} that classify finite groups $G$ in which the elements of the above sets are consecutive integers, i.e. arithmetic progressions of ratio $r=1$.

\begin{thmy}
If $\pi_e(G)=\{1,2,\dots,n\}$, then $n\leq 8$ and one of the following holds:
\begin{itemize}
\item[{\rm a)}] $n\leq 2$ and $G$ is an elementary abelian $2$-group;
\item[{\rm b)}] $n=3$ and $G=[N]Q$ is a Frobenius group, where either $N\cong C_3^t$, $Q\cong C_2$ or $N\cong C_2^{2t}$, $Q\cong C_3$;
\item[{\rm c)}] $n=4$ and $G=[N]Q$ and one of the following holds:
\begin{itemize}
\item[{\rm (i)}] $N$ has exponent $4$ and class $\leq 2$ and $Q\cong C_3$;
\item[{\rm (ii)}] $N=C_2^{2t}$ and $Q\cong S_3$;
\item[{\rm (iii)}] $N=C_3^{2t}$ and $Q\cong S_3$ or $Q_8$ and $G$ is a Frobenius group;\newpage
\end{itemize}
\item[{\rm d)}] $n=5$ and $G\cong A_6$ or $G=[N]Q$, where $Q\cong A_5$ and $N$ is an elementary abelian $2$-group and a direct sum of natural $SL(2,4)$-modules;
\item[{\rm e)}] $n=6$ and $G$ is one of the following types:
\begin{itemize}
\item[{\rm (i)}] $G=[P_5]Q$ is a Frobenius group, where $Q\cong [C_3]C_4$ or $Q\cong SL(2,3)$ and $P_5=C_5^{2t}$;
\item[{\rm (ii)}] $G/O_2(G)\cong A_5$ and $O_2(G)$ is elementary abelian and a direct sum of natural and orthogonal $SL(2,4)$-modules;
\item[{\rm (iii)}] $G=S_5$ or $G=S_6$;
\end{itemize}
\item[{\rm f)}] $n=7$ and $G\cong A_7$;
\item[{\rm g)}] $n=8$ and $G=[PSL(3,4)]\langle\beta\rangle$, where $\beta$ is a unitary automorphism of $PSL(3,4)$.
\end{itemize}
\end{thmy}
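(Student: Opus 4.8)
The plan is to convert the hypothesis into a constraint on the Gruenberg--Kegel (prime) graph $\Gamma(G)$ and then apply the Gruenberg--Kegel theorem together with the classification of finite simple groups. Writing $\pi=\{\,p\ \text{prime}:p\le n\,\}$, the equality $\pi_e(G)=\{1,\dots,n\}$ says precisely that $\pi(G)=\pi$, that for every prime power $p^k\le n$ there is an element of order $p^k$, and that two primes $p,q$ are joined in $\Gamma(G)$ iff $pq\le n$. By Bertrand's postulate there is a prime $p$ with $n/2<p\le n$, and since $2p>n$ this $p$ is an isolated vertex, so $\Gamma(G)$ is disconnected for every $n\ge 2$ and for $n\in\{5,7,8,9\}$ (and intermittently beyond) has at least three components. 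Every subsequent step exploits this disconnectedness.

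The engine is the Gruenberg--Kegel theorem in its sharp, CFSG-dependent form: a finite group with disconnected prime graph is a Frobenius group, a $2$-Frobenius group, or admits a normal series $1\le N\le K\le G$ in which $N$ and $G/K$ have orders divisible only by primes from the connected component of $2$, and $S:=K/N$ is a nonabelian simple group whose prime graph is disconnected. I would dispose of the first two possibilities directly. For $n\le 4$ this is most of the work: the cases $n\le 2$ are immediate (yielding a)), and for $n=3,4$ the absence of elements of order $6$ and of order $9$, together with the bound this forces on the exponent of a Sylow $2$-subgroup, pins down $N$ and the complement via the Frobenius/$2$-Frobenius dichotomy, producing b) and c). For $n\ge 5$, a short computation of $\pi_e([N]Q)=\pi_e(N)\cup\pi_e(Q)$ — with $N$ nilpotent and $Q$ having cyclic or generalized-quaternion Sylow subgroups — shows the Frobenius case can fill $\{1,\dots,n\}$ only for $n\le 6$, giving e)(i), while the $2$-Frobenius case contributes nothing new for $n\ge 5$, its restricted spectrum being unable to fill a full interval of that length.

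The real content is the simple-section case. Here $S$ is nonabelian simple with disconnected prime graph, $\pi(S)\subseteq\pi$, and $\pi_e(S)\subseteq\{1,\dots,n\}$; moreover $|S|$ is divisible by the primes lying in the non-$2$ components of $\Gamma(G)$ that meet $\pi(S)$, which are roughly $>n/2$. Feeding this into the Williams--Kondrat'ev list of simple groups with disconnected prime graph and discarding every entry whose spectrum overshoots $\{1,\dots,8\}$ or omits a prime $\le n$ that no extension can recover (this kills, e.g., $PSL(2,7)$ and $PSL(2,8)$ since $5\nmid$ their order, and $A_m$ for $m\ge 8$ because of elements of order $15$ and larger) leaves a short finite list: $A_5,A_6,A_7$, a handful of $PSL(2,q)$, and $PSL(3,4)$. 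For each survivor $S$ one then determines the admissible ambient group: the top quotient $G/K$ is forced to be trivial or the unique order-$2$ outer automorphism not creating an order $>n$ (the unitary one for $PSL(3,4)$), and $N$ is forced to be an elementary abelian $p$-group, indeed a prescribed $\mathbb{F}_pS$-module. Computing element orders in $V\rtimes S$ shows $\pi_e$ stays inside $\{1,\dots,n\}$ exactly when $V$ is a sum of the specified irreducibles — natural (and orthogonal) $SL(2,4)$-modules in d) and e)(ii), exponent-$4$ class-$\le 2$ modules under $C_3$ in c)(i), and so on — and the enumeration simultaneously forces $n\le 8$, so nothing arises for $n\ge 9$. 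Running through the list assembles exactly a)--g).

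The chief obstacle is this last analysis, in two parts. First, the CFSG bookkeeping: to cut the infinitely many groups of Lie type down to a finite list one needs uniform lower bounds on their spectra — regular semisimple elements in maximal tori of order at least roughly $q^{\mathrm{rk}}-1$ push the element order out of $\{1,\dots,8\}$ unless rank and field size are tiny. Second, and where most of the genuine labour sits, the module computation: for each relevant pair $(S,p)$ one must enumerate the irreducible $\mathbb{F}_pS$-modules, compute the order of $vg$ for $v\in V$ and $g\in S$ (which for $p\mid|g|$ is $|g|$ or $p|g|$, governed by the Jordan form of $g$ on $V$ and whether $g$ fixes a suitable vector), and identify those $V$ whose resulting order set is precisely the interval $\{1,\dots,n\}$ — together with the converse, that no other module and no $p$-group of higher exponent works. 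This representation-theoretic step, not the group-theoretic skeleton, is the heart of the theorem.
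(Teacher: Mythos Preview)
The paper does not prove Theorem~A. It is quoted from Brandl and Shi~\cite{1} as a known classification result, serving only as background for the paper's own Theorems~1.1 and~1.2 (which treat the odd-order case $r\ge 2$). There is therefore no proof in this paper against which to compare your attempt.

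That said, your outline is essentially the strategy of the original Brandl--Shi paper: observe via Bertrand's postulate that $\Gamma(G)$ is disconnected, invoke the Gruenberg--Kegel trichotomy, dispose of the Frobenius and $2$-Frobenius cases by direct spectrum computation, and in the almost-simple case run through the Williams--Kondrat'ev list of simple groups with disconnected prime graph, eliminating all but $A_5,A_6,A_7,PSL(3,4)$ and a few small $PSL(2,q)$ by order and spectrum constraints, then determining the admissible $\mathbb{F}_pS$-modules for the normal subgroup~$N$. As a plan it is sound, and you have correctly located where the labour lies: the uniform bounds that cut the Lie-type families down to finitely many candidates, and the module-by-module computation of $\pi_e(V\rtimes S)$.

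Two places where your sketch would need tightening before it becomes a proof: the assertion that the $2$-Frobenius case contributes nothing for $n\ge 5$ is true but not immediate --- one needs that the middle Frobenius complement is cyclic of prime-power order, which forces the spectrum to be too sparse to fill $\{1,\dots,n\}$; and the claim that for each surviving $S$ the top quotient $G/K$ is either trivial or a specific involutory outer automorphism requires checking, for each candidate, that every other outer automorphism creates an element of order exceeding $n$. Both are routine but not omittable.
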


\begin{thmy}
If $\pi_s(G)=\{1,2,\dots,n\}$, then $n\leq 4$ and one of the following holds:
\begin{itemize}
\item[{\rm a)}] $n=1$ and $G\cong C_p$, $p$ a prime;
\item[{\rm b)}] $n=2$ and $G\cong C_2\times C_2$ or $G\cong C_4$;
\item[{\rm c)}] $n=3$ and $G\cong S_3$ or $G\cong C_6$;
\item[{\rm d)}] $n=4$ and $G\cong A_4$.
\end{itemize}
\end{thmy}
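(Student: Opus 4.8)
The plan is to pin down $|G|$ exactly, then settle $n\le 4$ by inspection and rule out $n\ge 5$ by a size argument.

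First I would handle the general structure. Since the trivial subgroup gives $1\in\pi_s(G)$ always, the case $n=1$ means $G$ has no proper nontrivial subgroup, so $G\cong C_p$. Assume $n\ge2$. For every prime power $p^j\le n$ we have $p^j\in\pi_s(G)$, so $G$ has a proper subgroup of order $p^j$ and hence $p^j\mid|G|$; thus $L:=\operatorname{lcm}(1,2,\dots,n)$ divides $|G|$. If a prime $q>n$ divided $|G|$, then $qL\mid|G|$ (as $q\nmid L$), so a subgroup of order $q$ would be proper, forcing $q\in\pi_s(G)$ — impossible; hence every prime divisor of $|G|$ is $\le n$. If $G$ is a $p$-group then $\pi_s(G)\subseteq\{1,p,p^2,\dots\}$, which equals $\{1,\dots,n\}$ only if $p=2$, $n=2$, giving $|G|=4$ and $G\cong C_4$ or $C_2\times C_2$. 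Otherwise every Sylow subgroup of $G$ is proper, so has order $\le n$; together with $L\mid|G|$ this forces $|G|=L$. In particular $G$ is non-nilpotent whenever $n\ge3$.

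The small cases then fall out. For $n=3$, $|G|=6$, and both groups of order $6$ have $\pi_s=\{1,2,3\}$. For $n=4$, $|G|=12$; among the five groups of order $12$ only $A_4$ lacks a subgroup of order $6>4$, so $G\cong A_4$. Now suppose $n\ge5$, so $60\mid|G|$. A Hall-subgroup argument shows $G$ is not solvable: if it were, and $q$ is the largest prime $\le n$ (so $q\ge5$ and $q>n/2$ by Bertrand's postulate), then the Hall $\{2,q\}$-subgroup is proper (since $3\mid|G|$) of order $\ge4q>2n$, contradicting $\pi_s(G)=\{1,\dots,n\}$. Hence $G$ has a non-abelian simple section $T$; lifting its largest proper subgroup through the relevant normal series produces a proper subgroup of $G$ of order $\ge|T|/\iota(T)$, where $\iota(T)$ is the least index of a proper subgroup of $T$. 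Since $|T|/\iota(T)\ge12$ for every non-abelian simple $T$ (equality only for $T\cong A_5$), this already contradicts $\pi_s(G)=\{1,\dots,n\}$ when $5\le n\le11$. For $n\ge12$ one combines $|T|/\iota(T)\le n$ with the standard bound $\iota(T)<|T|^{1/2}$ to get $|T|<n^2$ for every non-abelian composition factor $T$ of $G$; then, using that the solvable radical $R$ of $G$ is proper (hence $|R|\le n$) and that $G/R$ embeds into the automorphism group of its socle — a product of few simple groups of order $<n^2$, and $|\operatorname{Out}(X)|=O(\log|X|)$ for simple $X$ — one bounds $|G|$ by a sub-exponential function of $n$ (roughly $n^{O(\log n)}$). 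This is incompatible with $|G|=\operatorname{lcm}(1,\dots,n)=e^{(1+o(1))n}$ (Chebyshev) once $n$ exceeds an explicit bound $N_0$, and the finitely many surviving values $12\le n\le N_0$ are eliminated by the same structural analysis: for each, the possible non-abelian composition factors lie among $A_5,\ \mathrm{PSL}(2,7),\ \mathrm{PSL}(2,8),\dots$, and in every configuration one exhibits a proper subgroup whose order is not in $\{1,\dots,n\}$.

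The elementary part — obtaining $|G|=\operatorname{lcm}(1,\dots,n)$, disposing of $n\le4$, and eliminating solvable $G$ — is routine. The main obstacle is the non-solvable case: one must control the simple sections of $G$ and the orders of their subgroups, which is where information about the finite simple groups (at least the small ones and their maximal subgroups) is needed, and making the sub-exponential bound on $|G|$ and the cut-off $N_0$ explicit, and then clearing the residual cases, is the heart of the argument.
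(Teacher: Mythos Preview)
The paper does not prove Theorem~B at all: it is quoted from Shi~\cite{8} as background for the paper's own results (Theorems~1.1 and~1.2), so there is no in-paper argument to compare yours against.

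On the merits of your sketch: the elementary half is correct and efficient. Your derivation of $|G|=\operatorname{lcm}(1,\dots,n)$ in the non-$p$-group case, the disposal of $n\le 4$, and the elimination of solvable $G$ for $n\ge 5$ via a Hall $\{2,q\}$-subgroup and Bertrand are all sound.

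The non-solvable branch, however, is both over-engineered and not actually finished. You invoke CFSG-level statements (the bound $\iota(T)<|T|^{1/2}$ for every non-abelian simple $T$, and $|\mathrm{Out}(T)|=O(\log|T|)$) to obtain an asymptotic contradiction, and then defer an unspecified finite verification for $12\le n\le N_0$ --- which you yourself flag as ``the heart of the argument''. As written this is a genuine gap. More to the point, the solvable/non-solvable split is unnecessary. For $n\ge 5$ take the largest prime $p\le n$; then $p>n/2$, so the Sylow $p$-subgroup $P$ has order~$p$. Now look at $N_G(P)$:
\begin{itemize}
\item if $P\lneq N_G(P)\lneq G$, then $|N_G(P)|\ge 2p>n$, a proper subgroup of forbidden order;
\item if $N_G(P)=G$, then $P\lhd G$ and $P\cdot P_2$ (with $P_2\in\mathrm{Syl}_2(G)$) is a proper subgroup (since $3\mid|G|$) of order $p\cdot 2^{a}>(n/2)^2>n$;
\item if $N_G(P)=P$, then Burnside's transfer theorem yields a normal $p$-complement $K\lhd G$, a proper subgroup of order $L(n)/p>n$ (one checks $L(n)>n^2$ for all $n\ge 5$, directly for small $n$ and via Chebyshev for large $n$).
\end{itemize}
This disposes of $n\ge 5$ uniformly, with no appeal to the classification of simple groups and no residual finite check.
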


\begin{thmy}
If $\pi_{as}(G)=\{1,2,\dots,n\}$, then $n\leq 6$ and one of the following holds:
\begin{itemize}
\item[{\rm a)}] $n=1$ and $G\cong C_p$, $p$ a prime;
\item[{\rm b)}] $n=2$ and $G\cong C_2\times C_2$ or $G\cong C_4$;
\item[{\rm c)}] $n=3$ and $G\cong S_3$ or $G\cong C_6$;
\item[{\rm d)}] $n=4$ and $G\cong A_4$ or $G\cong S_4$;
\item[{\rm e)}] $n=4$ and $G\cong A_5$;
\item[{\rm f)}] $n=4$ and $G\cong S_5$.
\end{itemize}
\end{thmy}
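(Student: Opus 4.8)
The plan is to reduce everything to Theorems~A and~B through an analysis of the subgroup structure of $G$. If $G$ is abelian, $\pi_{as}(G)$ is the set of proper divisors of $|G|$, which is an initial segment $\{1,2,\dots,n\}$ exactly when $|G|\in\{p,4,6\}$, accounting for the abelian groups in a)--c). So assume $G$ is nonabelian; then $G$ is noncyclic, so $\langle x\rangle$ is a proper abelian subgroup for every $x\in G$, whence $\pi_e(G)\subseteq\pi_{as}(G)=\{1,\dots,n\}$, and every abelian subgroup of $G$, being proper, has order $\leq n$. We record two facts for repeated use: (a) $C_p\leq G$ for every prime $p\leq n$, so $\pi_e(G)$ contains $1$ and every prime $\leq n$; (b) since the only abelian group of squarefree order $k$ is $C_k$, every squarefree $k\leq n$ already lies in $\pi_e(G)$ (in particular $6\in\pi_e(G)$ once $n\geq6$), so any ``hole'' of $\pi_e(G)$ inside $\{1,\dots,n\}$ occurs at a non-squarefree integer, the least candidate being $4$.

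If every proper subgroup of $G$ is abelian, then $\pi_s(G)=\pi_{as}(G)=\{1,\dots,n\}$ is an initial segment and Theorem~B yields $n\leq4$ with $G$ among its six groups, the nonabelian ones being $S_3$ ($n=3$) and $A_4$ ($n=4$); both satisfy the hypothesis. Otherwise $G$ has a proper nonabelian subgroup, and we split according to whether $\pi_e(G)$ is an initial segment. Suppose first $\pi_e(G)=\{1,\dots,m\}$ with $m=\max\pi_e(G)$. By Theorem~A, $m\leq8$ and $G$ lies on its explicit list; computing $\pi_{as}(G)$ for each listed group that possesses a proper nonabelian subgroup, every one except $S_4=[C_2^2]S_3$ ($n=4$) and $S_5$ ($n=6$) is discarded because $\pi_{as}(G)$ is not an interval. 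Two mechanisms do the work: in the semidirect products $[N]Q$ on the list, the abelian normal subgroup $N$ puts the whole chain $1,p,p^2,\dots,|N|$ of its subgroup orders into $\pi_{as}(G)$, which for $|N|\geq p^2$ overshoots $\max\pi_e(G)$ and opens a gap (killing all but the minimal member of each family $C_3^t$, $C_2^{2t}$, $C_3^{2t}$, $C_5^{2t}$ and of the $SL(2,4)$-module sums); and $A_6,S_6,A_7,[PSL(3,4)]\langle\beta\rangle$ contain a $C_3\times C_3$ (forcing $9\in\pi_{as}(G)$) while lacking an abelian subgroup of some smaller order, so $\pi_{as}(G)$ is again not an interval. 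One checks $\pi_{as}(S_4)=\{1,2,3,4\}$ and $\pi_{as}(S_5)=\{1,2,3,4,5,6\}$, matching items d) and f).

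There remains the case that $G$ has a proper nonabelian subgroup but $\pi_e(G)$ is not an initial segment, where Theorem~A is unavailable; let $q$ be the least hole of $\pi_e(G)$ in $\{1,\dots,n\}$, so $q\geq4$ by~(b). In the principal sub-case $q=4$, $G$ has no element of order $4$, hence elementary abelian Sylow $2$-subgroups; Walter's classification of finite groups with abelian Sylow $2$-subgroups, together with $\pi_e(G)\subseteq\{1,\dots,n\}$, the presence of $C_k\leq G$ for all squarefree $k\leq n$, and the bound $\leq n$ on abelian-subgroup orders, leaves $G\cong A_5$ with $\pi_{as}(A_5)=\{1,2,3,4,5\}$ and $n=5$ (item e)). In each residual sub-case $q\geq8$ one has $\{1,\dots,7\}\subseteq\pi_e(G)$, hence $2\cdot3\cdot5\cdot7\mid|G|$, and a Gruenberg--Kegel/prime-graph argument combined with the list of small simple groups and Burnside's transfer theorem rules the possibility out. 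Assembling the cases gives $n\leq6$ and exactly the stated classification; conversely the groups $C_p,C_4,C_2^2,S_3,C_6,A_4,S_4,A_5,S_5$ are checked directly to realize the asserted initial segments.

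The step I expect to be the main obstacle is precisely this last case: once $\pi_e(G)$ has a gap there is no ready-made structural theorem to quote, so one must pin down the composition factors of $G$ from the severe combined restrictions that all element orders and all abelian subgroup orders be $\leq n$, with $\pi_e(G)$ divisor-closed and $\pi_{as}(G)$ a full interval --- which is where Walter's theorem and the Gorenstein--Walter theorem on small Sylow $2$-subgroups, together with element-order data for $PSL(2,q)$ and the small simple groups, enter. A lesser but genuine technicality is making the ``scan Theorem~A's list'' step uniform across its infinite families, which is handled by the single observation that the abelian normal subgroup $N$ already drags a full chain of prime-power orders into $\pi_{as}(G)$.
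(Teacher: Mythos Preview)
The paper does not give a proof of this statement: Theorem~C is quoted as a known background result from Feng~\cite{4}, alongside Theorems~A and~B from \cite{1,8}, and is used only as context for the new Theorems~1.1 and~1.2 on groups of odd order. So there is no in-paper argument to compare your proposal against.

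On the substance of your outline: the reductions are sound where you make them precise. The abelian case is correct (proper subgroup orders of an abelian group are exactly the proper divisors of $|G|$, and $\{1,\dots,n\}$ arises only for $|G|\in\{p,4,6\}$). The observation $\pi_e(G)\subseteq\pi_{as}(G)$ for nonabelian $G$, the divisor-closure remark forcing any hole to be non-squarefree, the reduction to Theorem~B when every proper subgroup is abelian, and the scan of Theorem~A's list when $\pi_e(G)$ is already an initial segment are all legitimate. Your mechanism for discarding the infinite families---the abelian kernel $N$ injects the chain $1,p,p^2,\dots,|N|$ into $\pi_{as}(G)$---is the right uniform argument.

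The genuine gap is exactly where you flag it: the case $4\notin\pi_e(G)$ producing $A_5$, and the exclusion of $q\ge 8$. Invoking Walter's theorem here is both heavier than needed and not yet an argument, since $G$ need not be simple. For the specific case $n=5$, $q=4$, you can avoid Walter entirely: the bound on abelian subgroup orders forces the Sylow subgroups to be $C_2^2$, $C_3$, $C_5$, so $|G|=60$; if $G$ were solvable, a Hall $\{3,5\}$-subgroup would be cyclic of order $15>n$, contradiction; hence $G\cong A_5$. A similar order computation disposes of $n=6,7$ with $q=4$. The $q\ge 8$ tail (which forces $2\cdot3\cdot5\cdot7\mid|G|$ and $6\in\pi_e(G)$) still needs a real argument rather than a gesture toward Gruenberg--Kegel; as written, that step is not a proof.
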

\smallskip

The current paper deals with the case $r\geq 2$. Then we may assume that $G$ is of odd order, by Cauchy's theorem. Our main results are as follow.\newpage

\begin{theorem}
Let $G$ be a finite group of odd order. Then $\pi_e(G)$ is an arithmetic progression if and only if $G$ is either a $p$-group of exponent $p$ or a Frobenius group with kernel $P\in Syl_p(G)$ of exponent $p$ and complement $Q\in Syl_q(G)$ of order $q$, and moreover $p=2q-1$ or $q=2p-1$.
\end{theorem}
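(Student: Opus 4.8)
I would start with the easy direction and the normalizations. The converse is a direct computation: a $p$-group of exponent $p$ has $\pi_e=\{1,p\}$, and a Frobenius group $G=[P]Q$ of the stated type has $\pi_e(G)=\{1,p,q\}$ (every nontrivial element lies in $P$, where it has order $p$, or is conjugate into $Q$, where it has order $q$), which is an arithmetic progression exactly when the larger of $p,q$ equals twice the smaller minus one, i.e. $p=2q-1$ or $q=2p-1$. For the forward direction let $G\neq 1$ with $\pi_e(G)$ an arithmetic progression. Since $1\in\pi_e(G)$ the progression starts at $1$, so $\pi_e(G)=\{1+jr:0\le j\le k\}$, and $|G|$ odd forces $2\notin\pi_e(G)$, hence $r\ge 2$. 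As $\pi_e(G)$ is closed under taking divisors, its smallest element above $1$ is prime --- call it $p$ --- and it is the least prime divisor of $|G|$, with $r=p-1$. Writing $a_j=1+j(p-1)$, the whole problem reduces to showing $k\le 2$: then every nontrivial element has order $p$ or $2p-1$, both prime (since $2p-1<p^2$ while all its prime factors are $\ge p$), so $G$ is a ${\rm CP}_1$-group, and invoking the classification of finite groups in which every nontrivial element has prime order gives that $G$ is a $p$-group of exponent $p$, or $A_5$, or a Frobenius group whose kernel is a Sylow subgroup of prime exponent and whose complement has prime order; $A_5$ is ruled out by odd order, and in the Frobenius case the progression condition on $\{1,p,q\}$ yields $p=2q-1$ or $q=2p-1$.

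The heart of the proof is establishing $k\le 2$, and I would argue by contradiction. Suppose $k\ge 3$. The elementary observation that $a_j<p^2$ for $j\le p$ while every prime divisor of $a_j$ is $\ge p$ shows that $a_1=p$, $a_2=2p-1$, $a_3=3p-2$ are three distinct primes, so $|\pi(G)|\ge 3$. Now I would use that $G$ is solvable (Feit--Thompson), hence, by the Gruenberg--Kegel theorem, its prime graph $\Gamma(G)$ has at most two connected components (an odd-order group has no nonabelian simple section, so a disconnected $\Gamma(G)$ forces $G$ to be Frobenius or $2$-Frobenius, each of which has exactly two components). I plan to contradict this by locating three primes of $\pi(G)$ in pairwise distinct components, using that distinct primes $t,t'$ of $\pi(G)$ are adjacent in $\Gamma(G)$ iff $tt'\in\pi_e(G)$, i.e. iff $tt'\le a_k$.

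If $p\ge 5$, I would first bound $k$: with $\ell$ the least odd prime not dividing $p-1$ (one checks $\ell<p$), some $a_j$ with $j\le\ell$ is a multiple of $\ell$ exceeding $\ell$, hence composite with the prime factor $\ell<p$ and therefore not in $\pi_e(G)$; this forces $k<\ell<p$, so in particular $k\le 2p$. Then any two distinct primes $t,t'$ of $\pi(G)$ satisfy $tt'\ge p(2p-1)>1+k(p-1)=a_k$, so $\Gamma(G)$ has no edges at all, contradicting the two-component bound. If $p=3$, then $\pi_e(G)=\{1,3,5,\dots,2k+1\}$ and $\pi(G)$ is the set of all odd primes $\le 2k+1$; for $k\le 6$ the same ``no edges'' argument applies ($k\le 2p$), while for $k\ge 7$ the interval $\bigl(\tfrac{2k+1}{3},2k+1\bigr]$ contains at least three primes (a routine prime-counting estimate), each of them isolated in $\Gamma(G)$ because an edge at such a prime $P$ would demand a prime $t$ with $tP\le 2k+1$, i.e. $t<3$, impossible as $|G|$ is odd --- so again $\Gamma(G)$ has at least three components. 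Either way $k\le 2$, finishing the proof.

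The step I expect to be the main obstacle is exactly this control on the length of the progression: there is no purely arithmetic reason (especially when $p=3$, where $\pi_e(G)$ is just an initial stretch of the odd numbers) why $k$ should be small, so the rigidity has to be extracted from the interplay of Feit--Thompson, the Gruenberg--Kegel classification of groups with disconnected prime graph, and an estimate on primes in a short interval. Making these ingredients interlock cleanly, and disposing of the handful of small borderline values of $k$, is where the real work lies.
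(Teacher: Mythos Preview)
Your argument is correct and follows the paper's overall strategy: reduce to showing that every nontrivial element of $G$ has prime order, then invoke the classification of ${\rm CP}_1$-groups (ruling out $A_5$ by odd order). The difference lies only in the auxiliary tool and in the packaging of the $p\ge 5$ case.

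The paper uses Lucido's Three Primes Lemma (in a finite solvable group, among any three prime divisors two have their product in $\pi_e(G)$), whereas you use the Gruenberg--Kegel consequence that the prime graph of a solvable group has at most two connected components. These are essentially the same ingredient: three pairwise nonadjacent primes violate Lucido directly, and your ``three isolated vertices'' formulation is just a special case of this. For $p=3$ the two arguments run in parallel --- Bertrand's postulate (paper) or a short-interval prime count (you) produces three large primes whose pairwise products exceed $a_k$, and then Lucido/Gruenberg--Kegel yields the contradiction; the paper also disposes of small values of $m$ by the same direct appeal to Lucido.

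For $p\ge 5$ the paper is slightly more economical: it observes purely arithmetically that an arithmetic progression of ratio $p-1$ cannot contain more consecutive prime terms than the smallest prime $q$ not dividing $p-1$, and since $q<p$ while $a_1,\dots,a_p$ would all have to be prime, one gets a contradiction without touching the prime graph at all. Your route reaches the same bound $k<\ell<p$ and then passes through Gruenberg--Kegel, which is correct but unnecessary here --- once $k<p$ you already know every $a_i<p^2$ has all prime factors $\ge p$ and is therefore prime, so the ${\rm CP}_1$ conclusion follows immediately.
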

\smallskip

It is not known whether there are infinitely many pairs of primes $(q,p)$ with $p=2q-1$.\footnote{These pairs are known as \textit{Cunningham chains of the second kind of length $2$} - see, e.g., the sequence A005382 in \cite{9}.} So, we cannot decide whether there are infinitely many groups of the above second kind. Note that the smallest one is of order $75$, namely ${\rm SmallGroup}(75,2)=[P]Q$, where $P\cong C_5\times C_5$ and $Q\cong C_3$.

\begin{theorem}
Let $G$ be a finite group of odd order. Then $\pi_s(G)$ or $\pi_{as}(G)$ is an arithmetic progression if and only if either $G\cong C_p\times C_p$ or $G$ is cyclic of one of the following types: $C_{p^i}$, $i=1,2$, or $C_{pq}$ with $p$ and $q$ distinct primes such that $p=2q-1$ or $q=2p-1$.
\end{theorem}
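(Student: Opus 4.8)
The plan is to normalise the problem, then reduce it to bounding the length of the progression. Write $\pi(G)$ for whichever of $\pi_s(G),\pi_{as}(G)$ is assumed to be an arithmetic progression. Note first that $1\in\pi(G)$, that every member of $\pi(G)$ divides $|G|$ by Lagrange, and that if $|G|$ is not prime then the least prime $p\mid|G|$ belongs to $\pi(G)$ (a subgroup of order $p$ is cyclic, hence abelian, and proper) while no member of $\pi(G)$ lies strictly between $1$ and $p$. Consequently the common difference is $r=p-1$ and $\pi(G)=\{1,p,2p-1,3p-2,\dots\}$; since $|G|$ is odd, $p\ge3$ and $r\ge2$, in accordance with the hypothesis. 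The ``if'' direction is then immediate: every group in the list is abelian, so $\pi_s=\pi_{as}$, and $\pi(C_p)=\{1\}$, $\pi(C_{p^2})=\pi(C_p\times C_p)=\{1,p\}$, $\pi(C_{pq})=\{1,p,q\}$, the last being an arithmetic progression exactly when $q=2p-1$ (or $p=2q-1$).

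For the ``only if'' direction I would first settle the short cases $|\pi(G)|\le3$. If $|\pi(G)|=1$ then $G$ has no proper nontrivial subgroup, so $|G|$ is prime and $G\cong C_p$. If $|\pi(G)|=2$, say $\pi(G)=\{1,\ell\}$, then every minimal subgroup has order $\ell$, so $\ell$ is prime and, by Cauchy, $G$ is an $\ell$-group; were $|G|\ge\ell^3$ it would have a proper subgroup of order $\ell^2\notin\pi(G)$, so $|G|=\ell^2$ and $G\cong C_{\ell^2}$ or $C_\ell\times C_\ell$. If $|\pi(G)|=3$ then $\pi(G)=\{1,p,2p-1\}$; no prime other than $p$ and $2p-1$ divides $|G|$ (it would be a fourth member of $\pi(G)$), and $2p-1$ is itself prime because its prime factors are $\ge p$ and it is not a power of $p$ (as $p\nmid 2p-1$); moreover $p^2\nmid|G|$ and $(2p-1)^2\nmid|G|$, since a subgroup of either order would be a fourth member of $\pi(G)$; hence $|G|=p(2p-1)$, and as a nonabelian group of that order would force $p\mid 2p-2$, impossible for odd $p$, we get $G\cong C_{p(2p-1)}$.

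It remains to exclude $|\pi(G)|\ge4$. The engine is the observation that a term $t_j=1+j(p-1)$ of the progression with $t_j<p^2$ must be prime (it divides $|G|$, all its prime factors are $\ge p$, and it is too small to be a product of two such), and that each prime term divides $|G|$. Thus $|\pi(G)|\ge4$ forces the three distinct primes $p$, $2p-1$, $3p-2$ to divide $|G|$, and pushing this further the prime divisors of $|G|$ must form an initial stretch $p<2p-1<3p-2<\cdots$ of these particular primes. One now derives a contradiction. For $p\ge5$ there is a prime $\ell<p$ with $\ell\nmid p-1$ (the product of the primes below $p$ exceeds $p-1$), and the least $j\ge1$ with $\ell\mid t_j$ is at most $p-2$; since $t_j\in\pi(G)$ would give $\ell\mid|G|$, the progression has at most $p-2$ terms, while a prime square dividing $|G|$ would contribute a term $t_{j'}$ with $j'\ge p+1$, so $|G|$ is squarefree; then, using that $G$ is solvable (odd order, by the Feit--Thompson theorem) and hence has Hall subgroups, a $\{2p-1,3p-2\}$-Hall subgroup has order $(2p-1)(3p-2)=t_{6p-1}$, which lies far beyond the admissible range --- a contradiction. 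The case $p=3$ is handled by hand: then $\pi(G)=\{1,3,5,7,\dots,L\}$ consists of consecutive odd integers, so $3,5,7\mid|G|$; a proper Hall subgroup omitting one of these primes has order $\ge15$, so $L\ge15$ and hence $11,13\mid|G|$; a $\{11,13\}$-Hall subgroup then has order $\ge143$, forcing still more primes into $|G|$, and this runs away without bound. Combining the three parts gives $|\pi(G)|\le3$, and the theorem follows.

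The step I expect to be the real obstacle is precisely this exclusion of $|\pi(G)|\ge4$, and within it the need to run the argument uniformly for $\pi_{as}$ as well as $\pi_s$: when only $\pi_{as}(G)$ is assumed to be a progression one cannot invoke arbitrary Hall subgroups (they need not be abelian), and one must instead exploit that $\pi_{as}(G)$ is closed under taking divisors of its members together with the metacyclic (${\rm Z}$-group) structure of groups of squarefree order, and make the ``runaway'' contradictions precise with explicit numerical estimates rather than the informal phrasing above.
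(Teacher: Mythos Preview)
Your outline is essentially sound, but it follows a genuinely different path from the paper's. The paper first proves a clean structural lemma (Lemma~3.1): \emph{all non-trivial proper subgroups of $G$ have prime order}. That lemma is obtained exactly as Lemma~2.1, i.e.\ via Lucido's Three Primes Lemma together with Bertrand's postulate; the point is that Lucido produces an \emph{element} of order $p_ip_j$, hence a cyclic (and therefore abelian) subgroup of that order, so the argument treats $\pi_s$ and $\pi_{as}$ simultaneously with no extra work. Once every proper subgroup has prime order, $G$ is cyclic or minimal non-cyclic, and the Miller--Moreno classification (Theorem~1.4) finishes instantly. Your route instead classifies by the length of $\pi(G)$ and kills $|\pi(G)|\ge 4$ with Hall subgroups; this is more hands-on but avoids quoting Theorem~1.4.

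Two remarks on where your version stands. First, your worry about the $p\ge 5$ case and $\pi_{as}$ is unnecessary: the Hall $\{2p-1,3p-2\}$-subgroup you use has order $(2p-1)(3p-2)$ with $2p-1\nmid 3p-3$ (since $\gcd(2p-1,p-1)=1$ and $2p-1>3$), so it is cyclic and hence lies in $\pi_{as}(G)$; that part of your proof is already complete. Second, the obstacle you flag in the $p=3$ case is real: once $|G|$ is not known to be squarefree, a Hall subgroup omitting one of $3,5,7$ need not be abelian, and your ``runaway'' sketch does not literally apply to $\pi_{as}$. This is precisely the place where the paper's device is cleaner: Lucido's lemma hands you a \emph{cyclic} subgroup of order $15$, $21$ or $35$, which is automatically abelian, and then the Bertrand-type bootstrap goes through verbatim for both $\pi_s$ and $\pi_{as}$. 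Swapping your Hall-subgroup step for a single invocation of Lucido would close the gap without changing the rest of your argument.
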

\smallskip

Together with Theorems A, B and C, these lead to a complete classification of finite groups $G$ for which $\pi_e(G)$, $\pi_s(G)$ or $\pi_{as}(G)$ are arithmetic progressions.

For the proof of Theorems 1.1 and 1.2 we need the classification of finite groups with all non-trivial elements of prime order (see \cite{3,2}) and the classification of finite minimal non-cyclic groups (see \cite{7}).

\begin{theorem}
Let $G$ be a finite groups having all non-trivial elements of prime order. Then:
\begin{itemize}
\item[{\rm a)}] $G$ is nilpotent if and only if $G$ is a $p$-group of exponent $p$.
\item[{\rm b)}] $G$ is solvable and non-nilpotent if and only if $G$ is a Frobenius group with kernel $P\in Syl_p(G)$, with $P$ a $p$-group of exponent $p$ and complement $Q\in Syl_q(G)$, with $|Q|=q$. Moreover, if $|G|=p^nq$ then $G$ has a chief series $G=G_0>P=G_1>G_2>\dots>G_k>G_{k+1}=1$ such that for every $1\leq i\leq k$ one has $G_i/G_{i+1}\leq Z(G/G_{i+1})$, $Q$ acts irreducibly on $G_i/G_{i+1}$ and $|G_i/G_{i+1}|=p^b$, where $b$ is the exponent of $p\,\, {\rm (mod}\,\, q{\rm )}$.
\item[{\rm c)}] $G$ is non-solvable if and only if $G\cong A_5$.
\end{itemize}
\end{theorem}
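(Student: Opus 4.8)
The ``if'' parts are immediate: a $p$-group of exponent $p$ is nilpotent with all non-trivial elements of order $p$; a Frobenius group $P\rtimes Q$ with $P$ of exponent $p$ and $|Q|=q$ is non-nilpotent, and each of its elements lies either in $P$ (order $1$ or $p$) or in a conjugate of $Q$ (order $1$ or $q$); and $A_5$ is non-solvable with element orders $1,2,3,5$. So the plan is to prove the ``only if'' parts. Assume from now on that every non-trivial element of $G$ has prime order, i.e.\ that $G$ is a ${\rm CP}_1$-group, and note that subgroups and quotients of $G$ again have this property, since the order of an element of a subgroup, and the order of the image of an element in a quotient, both divide the order of that element. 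If $G$ is nilpotent it is the direct product of its Sylow subgroups; two non-trivial Sylow subgroups would yield (via one non-trivial element from each) an element of composite order, so $G$ is a $p$-group, necessarily of exponent $p$. This proves (a).

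Next suppose $G$ is solvable but not nilpotent, so $|\pi(G)|\geq 2$. As $G$ has no element of composite order, its prime graph has no edges and hence is disconnected; by the Gruenberg--Kegel theorem, a solvable group with disconnected prime graph has exactly two prime-graph components and is a Frobenius group or a $2$-Frobenius group, so $\pi(G)=\{p,q\}$. I would first rule out the $2$-Frobenius case. Write $G=ABC$ with $A,AB\trianglelefteq G$, with $AB$ Frobenius of kernel $A$ and complement $B$, and with $G/A$ Frobenius of kernel $AB/A$ and complement isomorphic to $C$. Then $\pi(B)$ is disjoint from $\pi(A)\cup\pi(C)$, so $\pi(B)=\{q\}$ and $\pi(A)=\pi(C)=\{p\}$; here $A$ is a $p$-group, hence (being a ${\rm CP}_1$-group) of exponent $p$, while $B$ (a nilpotent $q$-group that is also a Frobenius complement) and $C$ (a $p$-group that is also a Frobenius complement) are cyclic of prime order, since generalized quaternion groups contain elements of order $4$. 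Choosing a Sylow $q$-subgroup $B_0$ of $AB$ (a complement to $A$ in $AB$, so $N_{AB}(B_0)=B_0$), the Frattini argument gives $|N_G(B_0)|=pq$, so there is an element $c$ of order $p$ with $c\notin A$ normalizing $B_0$. Pick a chief factor $\overline A=A_i/A_{i+1}$ of $G$ contained in $A$. Since $B_0$ acts fixed-point-freely on $A$, coprimeness makes it act fixed-point-freely, hence non-trivially, on $\overline A$; as $c$ normalizes $B_0$ and acts fixed-point-freely on $B_0\cong C_q$, it permutes the $B_0$-homogeneous components of $\overline A\otimes\overline{\mathbb F}_p$ (indexed by the non-trivial characters of $B_0$) in orbits of length $p$, so $c$ acts on $\overline A$ as a direct sum of Jordan blocks of size $p$ and $(c-1)^{p-1}\neq 0$ on $\overline A$. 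Taking $a\in A_i$ whose image is not killed by $(c-1)^{p-1}$, the element $ac$ satisfies $(ac)^p\in A_i\setminus A_{i+1}$, so $ac$ has order divisible by $p^2$ --- contradicting the ${\rm CP}_1$-property.

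Hence $G=P\rtimes Q$ is a Frobenius group. Its kernel $P$ is a nilpotent ${\rm CP}_1$-group, so a $p$-group of exponent $p$; since $\gcd(|P|,|Q|)=1$ the complement $Q$ is a $q$-group, and being a Frobenius complement it is cyclic, and being a ${\rm CP}_1$-group it has order $q$, and $P\in Syl_p(G)$. This is the shape required in (b). For the chief series, refine $1\trianglelefteq P\trianglelefteq G$ to a chief series of $G$; nothing lies strictly between $P$ and $G$ as $[G:P]=q$, and each chief factor $G_i/G_{i+1}$ inside $P$ is an elementary abelian $p$-group centralized by $P$ --- a $p$-group acting on a non-zero $\mathbb F_p$-module has non-zero fixed points, which form a $G$-submodule of the irreducible factor, hence all of it. Thus $Q$ acts irreducibly on $G_i/G_{i+1}$; as the coprime $Q$-action is fixed-point-free on $P$ it is fixed-point-free on each such factor, hence faithful, and a faithful irreducible $\mathbb F_p[C_q]$-module has $\mathbb F_p$-dimension equal to the order $b$ of $p$ modulo $q$, giving $|G_i/G_{i+1}|=p^b$.

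Finally let $G$ be non-solvable; I would prove $G\cong A_5$ by induction on $|G|$. Some composition factor of $G$ is a non-abelian simple ${\rm CP}_1$-group, which by Burnside's $p^aq^b$-theorem has at least three prime divisors and whose prime graph has no edges; by the classification of finite simple groups with disconnected prime graph --- the content of \cite{3,2} in this case --- it is $A_5$. Let $N$ be a minimal normal subgroup of $G$. If $N$ is non-abelian, then $N\cong A_5^k$ with $k=1$ (since $A_5^2$ contains an element of order $6$); so $G/C_G(N)$ embeds in $\mathrm{Aut}(A_5)\cong S_5$, contains a copy of $A_5$, and cannot be $S_5$ (which has an element of order $6$), whence $G=N\times C_G(N)$, and $C_G(N)=1$ because a prime-order element of $C_G(N)$ together with a suitable element of $N$ would have composite order; thus $G\cong A_5$. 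If $N$ is abelian, say $N\cong\mathbb F_p^d$, then $G/N$ is a smaller non-solvable ${\rm CP}_1$-group, so $G/N\cong A_5$ by induction. Let $V$ be a Klein four-subgroup of $A_5$ and $\widehat V\leq G$ its preimage, of order $4p^d$. If $p$ is odd, a Sylow $2$-subgroup $W$ of $\widehat V$ has order $4$ and is $\cong C_2\times C_2$ (not $C_4$, which is not a ${\rm CP}_1$-group); each involution of $W$ acts fixed-point-freely on $N$ (otherwise an element of order $2p$ appears), hence as $-1$ on the $\mathbb F_p$-space $N$, but the product of two of the three involutions of $W$ is the third, while $(-1)(-1)=1\neq-1$ --- a contradiction. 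So $p=2$, and then $\widehat V$ is itself a $2$-group of exponent $2$, hence elementary abelian, hence centralizes $N$; since the $A_5$-conjugates of $V$ generate $A_5$, the group $G/N\cong A_5$ acts trivially on $N$, forcing $N\leq Z(G)$ and $|N|=2$. But then $G$ is $C_2\times A_5$ or $SL(2,5)$, which contain elements of order $6$, resp.\ $4$ --- impossible. Therefore $N$ must be non-abelian and $G\cong A_5$. The main obstacle is the classification of the non-abelian simple ${\rm CP}_1$-groups, where the classification of finite simple groups enters (via the structure of simple groups with disconnected prime graph); a secondary delicate point is the elimination of the $2$-Frobenius case, which rests on the Jordan-block computation above rather than on a general structural argument.
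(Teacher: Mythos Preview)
The paper does not prove this theorem. Theorem~1.3 is quoted as a known classification, attributed to \cite{3,2}, and is used only as a black box in the proofs of Theorems~1.1 and~1.2. There is therefore no ``paper's own proof'' to compare your proposal against.

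Assessed on its own merits, your proposal is essentially correct. Part~(a) is immediate. For~(b), invoking Gruenberg--Kegel to obtain the Frobenius/$2$-Frobenius dichotomy and then eliminating the $2$-Frobenius case via the Jordan-block computation is sound: writing the chief factor $\bar A=A_i/A_{i+1}$ additively, one has $(\bar a\bar c)^p=(1+c+\cdots+c^{p-1})\bar a=(c-1)^{p-1}\bar a$ in characteristic~$p$, and your eigenspace-permutation argument (using that $c$ acts fixed-point-freely on the non-trivial characters of $B_0$) shows this is non-zero for a suitable $\bar a$; since $p$ divides the order of $ac$ (its image in $G/A$ being $cA$ of order~$p$), this contradicts the ${\rm CP}_1$-hypothesis. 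The chief-series assertion in the Frobenius case follows as you indicate, via $p$-group fixed points on $\mathbb F_p$-modules and the coprime fixed-point-free action of~$Q$. For~(c), your reduction from a general non-solvable ${\rm CP}_1$-group to a simple one through the minimal-normal-subgroup analysis is correct, but --- as you explicitly acknowledge --- pinning the simple ${\rm CP}_1$-group down to $A_5$ is exactly the deep content of \cite{3,2} and ultimately rests on the classification of finite simple groups. In summary, you have supplied a self-contained argument for (a) and~(b) along lines parallel to the original sources, while for~(c) you reduce to the same hard kernel that those sources address; the paper itself offers no alternative, since it simply cites the result.
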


\begin{theorem}
A finite group $G$ is a minimal non-cyclic group if and only if it is isomorphic to one of the following groups:\vspace{2mm}
\begin{itemize}
\item[{\rm a)}] $C_p\times C_p$, where $p$ is a prime;
\item[{\rm b)}] $Q_8$;
\item[{\rm c)}] $\langle a,b \mid a^p=b^{q^m}=1, b^{-1}ab=a^r \rangle$, where $p$ and $q$ are distinct primes and $r\not\equiv 1\, {\rm (mod}\, p{\rm )}$, $r^q\equiv 1\, {\rm (mod}\, p{\rm )}$.
\end{itemize}
\end{theorem}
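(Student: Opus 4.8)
I would prove the two implications by quite different means. \emph{Sufficiency} is a direct check: in (a) the proper subgroups have order $1$ or $p$, in (b) they are $1$, the centre $C_2$ and three copies of $C_4$, and both $C_p\times C_p$ and $Q_8$ are non-cyclic. For (c), writing $G=\langle a,b\mid a^p=b^{q^m}=1,\ b^{-1}ab=a^r\rangle$ with $r$ of order $q$ modulo $p$, one notes $|G|=pq^m$, that $\langle a\rangle\cong C_p$ is the unique (normal) Sylow $p$-subgroup and that every Sylow $q$-subgroup is cyclic, so every subgroup of $G$ has cyclic Sylow subgroups. A proper subgroup of order $q^j$ is then cyclic, and one of order $pq^j$ with $j<m$ contains $\langle a\rangle$ while a generator of its Sylow $q$-subgroup is conjugate to a power $b^k$ with $q\mid k$, hence centralises $a$ (as $r$ has order $q$); so that subgroup is abelian with cyclic Sylow subgroups, i.e. cyclic. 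Thus each family consists of minimal non-cyclic groups.

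For \emph{necessity}, let $G$ be non-cyclic with all proper subgroups cyclic, hence nilpotent; then $G$ is itself nilpotent or a minimal non-nilpotent (Schmidt) group. In the nilpotent case $G$ is the direct product of its Sylow subgroups, and since it is non-cyclic one Sylow subgroup is non-cyclic — which, if $G$ had more than one prime divisor, would be a proper non-cyclic subgroup. So $G$ is a non-cyclic $p$-group with all proper subgroups cyclic. If $G$ has two subgroups of order $p$, I would pick $A\le\Omega_1(Z(G))$ of order $p$ and $B\ne A$ of order $p$; then $AB\cong C_p\times C_p\le G$, forcing $G\cong C_p\times C_p$. Otherwise $G$ has a unique subgroup of order $p$, so by the classical structure of such $p$-groups it is cyclic (excluded) or generalised quaternion $Q_{2^n}$; since $Q_{2^n}$ with $n\ge 4$ contains the proper non-cyclic subgroup $Q_8$, we get $G\cong Q_8$.

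For the Schmidt case I would invoke Schmidt's theorem: $G=P\rtimes Q$ with $P$ a normal Sylow $p$-subgroup, $Q$ a cyclic Sylow $q$-subgroup ($p\ne q$) and $[P,Q]\ne 1$. Being proper, $P$ is cyclic, $P=\langle a\rangle\cong C_{p^m}$. Since $|Q|$ is a $q$-power with $q\ne p$ and $\mathrm{Aut}(C_{2^k})$ is a $2$-group, $[P,Q]\ne 1$ forces $p$ odd; then $\mathrm{Aut}(C_{p^m})$ is cyclic, its part of order prime to $p$ injects under restriction into $\mathrm{Aut}(\Omega_1(P))$, and so $Q$ acts non-trivially already on $\Omega_1(P)\cong C_p$. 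If $m\ge 2$ then $\Omega_1(P)\rtimes Q$ is a proper non-abelian, hence non-cyclic, subgroup — a contradiction; so $P\cong C_p$. Then $Q=\langle b\rangle\cong C_{q^m}$ acts on $C_p$ through a non-trivial map into $\mathrm{Aut}(C_p)\cong C_{p-1}$ with image $C_{q^t}$, $t\ge 1$; if $t\ge 2$ then $b^{q^{t-1}}$ still acts on $a$ with order $q$, so $\langle a\rangle\rtimes\langle b^{q^{t-1}}\rangle$ is a proper non-abelian subgroup — again impossible. Hence $t=1$, i.e. $b^{-1}ab=a^r$ with $r\not\equiv 1$ and $r^q\equiv 1\pmod p$: the group of type (c).

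The verification of sufficiency and the $p$-group bookkeeping are routine; the one genuinely external input — and the step I expect to be the main obstacle — is the structure of minimal non-nilpotent groups (Schmidt's theorem) underlying the splitting $G=P\rtimes Q$ with $P$ normal and $Q$ cyclic. Once that is available, everything else is a short squeeze driven by minimality of $G$. If one wanted to avoid quoting Schmidt, one could instead first establish solvability of $G$ (for instance via the fact that a finite group with an abelian maximal subgroup is solvable, applicable since every maximal subgroup here is cyclic) and then extract the same structure from a chief series; but Schmidt's theorem is the efficient route.
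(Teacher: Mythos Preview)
The paper does not prove this statement. Theorem~1.4 is quoted as a known classification, attributed to Miller and Moreno \cite{7}, and is used as a black box in the proof of Theorem~1.2; there is no proof in the paper to compare against.

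That said, your argument is correct and self-contained. The sufficiency checks are fine (your handling of case~(c) via the observation that $b^q$ centralises $a$, so every proper subgroup containing $\langle a\rangle$ lies in the abelian group $\langle a,b^q\rangle$, is clean). For necessity, the split into ``$G$ nilpotent'' versus ``$G$ minimal non-nilpotent'' is the efficient modern route: in the $p$-group case your use of a central subgroup of order $p$ together with the unique-subgroup-of-order-$p$ dichotomy is standard and correct, and in the Schmidt case the reductions $|P|=p$ and then $t=1$ via the proper non-abelian subgroups $\Omega_1(P)\rtimes Q$ and $\langle a\rangle\rtimes\langle b^{q^{t-1}}\rangle$ are accurate (note that $\langle b^{q^{t-1}}\rangle$ has order $q^{m-t+1}<q^m$ precisely when $t\ge 2$, so the subgroup is indeed proper). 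Your remark that solvability alone (via an abelian maximal subgroup) would suffice to recover the structure without citing Schmidt in full is also correct, and closer in spirit to the original 1903 argument, which predates Schmidt's theorem; invoking Schmidt is simply the quickest packaging.
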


We also need the following result of \cite{6}, known as Lucido’s Three Primes Lemma:

\begin{lemma}
Let $G$ be a finite solvable group. If $p$, $q$, $r$ are distinct primes dividing $|G|$, then $G$ contains an element of order the product of two of these three primes.
\end{lemma}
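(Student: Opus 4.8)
The plan is to reduce immediately to a group whose order involves only the primes $p$, $q$, $r$, and then to read off the conclusion from the structure of solvable groups with a disconnected prime graph. Since $G$ is solvable, P.~Hall's theorem furnishes a Hall $\{p,q,r\}$-subgroup $H$ of $G$, which is again solvable and satisfies $\pi(H)=\{p,q,r\}$. Because every element of $H$ is an element of $G$ of the same order, it is enough to find inside $H$ an element whose order is the product of two of $p$, $q$, $r$; so I would from now on assume $\pi(G)=\{p,q,r\}$ and argue by contradiction, supposing that $G$ contains no element of order $pq$, $qr$ or $rp$.

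The contradiction I would aim for is purely graph-theoretic. Let $\Gamma(G)$ be the Gruenberg--Kegel (prime) graph of $G$: its vertices are $p$, $q$, $r$, and two primes are joined precisely when $G$ contains an element of order their product. Under the contrary assumption $\Gamma(G)$ has no edge at all, hence three connected components. On the other hand, by the Gruenberg--Kegel theorem a finite solvable group whose prime graph is disconnected is a Frobenius group or a $2$-Frobenius group, and in either case the prime graph has exactly two connected components; in particular a finite solvable group can never have a prime graph with more than two components. This contradiction completes the proof.

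The single point that carries real content — and hence the main obstacle — is the assertion that the prime graph of a finite solvable group has at most two connected components; everything else is elementary counting, and solvability is genuinely needed (for instance $A_5$ has element orders only $1,2,3,5$, so it violates the statement for $(p,q,r)=(2,3,5)$). I would take that assertion from the Gruenberg--Kegel/Williams description of groups with disconnected prime graph, which in the solvable case is elementary and independent of the classification of finite simple groups. If one prefers to avoid quoting it, an alternative is induction on $|G|$: after the Hall reduction one takes a minimal normal subgroup $N$, elementary abelian and an $s$-group for some $s\in\{p,q,r\}$; when $N$ is not a Sylow $s$-subgroup, the quotient $G/N$ still has $p$, $q$, $r$ in its order and induction applies, a relevant element of squarefree order lifting back to $G$; otherwise $N$ is a normal Sylow $r$-subgroup (say $s=r$) and $G=N\rtimes H$ with $H$ a $\{p,q\}$-group, and then either $H$ already contains an element of order $pq$, or some nontrivial element of $H$ has a nonzero fixed point on $N$ (yielding an element of order $pr$ or $qr$), or $H$ acts fixed-point-freely so that $G$ is a Frobenius group with complement $H$ — and in this last situation the delicate step is to show that a solvable Frobenius complement with exactly two prime divisors always possesses an element whose order is their product.
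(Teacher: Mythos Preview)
The paper does not prove this lemma at all: it is simply quoted as ``Lucido's Three Primes Lemma'' with a reference to \cite{6}, and then used as a black box in the proof of Lemma~2.1. So there is no argument in the paper to compare your proposal against.

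As for the proposal itself, your main line (pass to a Hall $\{p,q,r\}$-subgroup, then observe that an edge-free prime graph on three vertices would give three connected components, contradicting the Gruenberg--Kegel/Williams bound of at most two components for solvable groups) is a correct and standard derivation of the lemma, and is in fact essentially how Lucido obtains it. You are also right that this is the only step with genuine content and that it does not rely on the classification of finite simple groups.

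Your alternative inductive route is also sound; the ``delicate step'' you flag --- that a solvable Frobenius complement $H$ with $\pi(H)=\{p,q\}$ must contain an element of order $pq$ --- really does go through. One way to close it: in a Frobenius complement every subgroup of order $pq$ is cyclic, so it suffices to exhibit such a subgroup. The Fitting subgroup $F(H)$ is nontrivial; if it meets both primes it is cyclic of order divisible by $pq$ and you are done. If $F(H)$ is (say) a cyclic $p$-group, pick $x\in H$ of order $q$; since $C_H(F(H))=F(H)$ the element $x$ acts nontrivially, and because automorphisms of a cyclic $p$-group of order coprime to $p$ act faithfully on the socle, $x$ acts fixed-point-freely on $F(H)$. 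Then $\langle z,x\rangle$, with $z$ of order $p$ in $F(H)$, is a non-cyclic subgroup of order $pq$, contradicting the Frobenius-complement property. Hence the first alternative must hold.
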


Finally, we formulate an open problem related to Theorem 1.2.

\medskip\noindent{\bf Open problem.} Determine all finite groups for which the set of proper normal subgroup orders is an arithmetic progression.
\medskip

Most of our notation is standard and will usually not be repeated here. Elementary notions and results on groups can be found in \cite{5}.

\section{Proof of Theorem 1.1}

We start with the following lemma.

\begin{lemma}
Let $G$ be a finite group of odd order such that $\pi_e(G)$ is an arithmetic progression. Then all non-trivial elements of $G$ are of prime order.
\end{lemma}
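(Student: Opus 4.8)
The plan is to write the progression starting from its forced first term. Since $1\in\pi_e(G)$ is the least element order while $2\notin\pi_e(G)$ ($|G|$ being odd), we must have $\pi_e(G)=\{1,1+r,1+2r,\dots,1+kr\}$ for some $r\ge 2$ and $k\ge 1$; here $1+r$ is the least element of $\pi_e(G)$ exceeding $1$, hence the least prime $p$ dividing $|G|$, and $r=p-1$. The first thing I would record is a ``no small composite'' observation: every divisor $>1$ of an element order is itself an element order, hence $\equiv 1\pmod r$ and so $\ge p$; therefore any composite element order is at least $p^{2}=(1+r)^{2}=1+(r+2)r$, which means that among $1+r,1+2r,\dots,1+(r+1)r$ the only possible element orders are primes. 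Thus it would suffice to prove $k\le r+1$ — which, however, is false when $p=3$, so a separate treatment of that case is unavoidable.

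For $p\ge 5$ (equivalently $r\ge 4$) I would argue directly and without solvability. The primes dividing $r$ cannot exhaust the primes $\le r$ (their product already exceeds $r$), so there is a least prime $\ell$ not dividing $r$, with $3\le\ell\le r$; then $\ell\ne p$, and since $0<\ell-1<r$ also $\ell\not\equiv 1\pmod r$. As $\gcd(\ell,r)=1$, some $1+j^{*}r$ with $1\le j^{*}\le\ell-1$ is divisible by $\ell$. If $k\ge j^{*}$ then $1+j^{*}r\in\pi_e(G)$, so $\ell\in\pi_e(G)$ (take a suitable power of an element of that order), forcing $\ell\equiv 1\pmod r$ — a contradiction. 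Hence $k\le j^{*}-1<r$, and by the observation above every non-trivial element of $G$ has prime order.

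For $p=3$ (so $r=2$), $\pi_e(G)=\{1,3,5,\dots,m\}$ with $m=2k+1$, and I would argue by contradiction: suppose $x\in G$ has composite order. Being odd and composite, $|x|\ge 9$, so $m\ge 9$; hence every odd prime $\le m$ divides $|G|$, and $G$ is solvable by the Odd Order Theorem. I would then apply Lucido's Three Primes Lemma to three distinct prime divisors $3,q_1,q_2$ of $|G|$ with $q_1<q_2$ chosen in $(m/3,m]$: it yields an element of order $3q_1$, $3q_2$ or $q_1q_2$, each of which exceeds $m$ (since $3q_i>m$ and $q_1q_2>(m/3)^{2}\ge m$), contradicting $m=\max\pi_e(G)$; so no element of composite order exists.

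The main obstacle is precisely this $p=3$ case. The ``bad prime'' mechanism of the case $p\ge 5$ collapses here because every odd prime is $\equiv 1\pmod 2$, so no forbidden prime can ever show up in $\pi_e(G)$, and one is forced to use solvability together with Lucido's lemma and the distribution of primes. The delicate point is guaranteeing that $(m/3,m]$ contains two primes for the values of $m$ in question: this holds for every $m\ge 9$, but a completely rigorous argument needs a modest estimate on $\pi(x)$ (or Bertrand's postulate plus a finite check of small $m$, where in fact the contradiction already comes from $\{3,5,7\}$ since $\min\{15,21,35\}>m$ there). Having ruled out composite orders, Theorem 1.3 may then be applied.
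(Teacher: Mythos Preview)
Your argument is correct and follows essentially the same route as the paper's proof: the same split into $p\ge 5$ versus $p=3$, the same ``smallest prime $\ell$ (resp.\ $q$) not dividing $r=p-1$'' mechanism for $p\ge 5$, and for $p=3$ the same combination of Feit--Thompson solvability, Lucido's Three Primes Lemma, Bertrand's postulate, and a finite check of small $m$. The only cosmetic difference is your choice of the triple in the $p=3$ case: you fix $3$ as one of the three primes and take $q_1,q_2\in(m/3,m]$, whereas the paper takes three primes in successive dyadic intervals below $m$; your variant shrinks the residual finite check (you need $m\ge 9$ rather than $m\ge 41$), and your remark that $\{3,5,7\}$ already handles the small cases is exactly the paper's ``direct elimination'' step.
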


\begin{proof}
Let $\pi_e(G)=\{a_0=1,a_1,\dots,a_n\}$. Then $a_1$ is the smallest prime $p$ dividing $|G|$ and so $a_i=ip-i+1$, for all $i$. In particular, we have $a_{p+1}=p^2$. Since for every $d\in\pi_e(G)$ the set of divisors of $d$ is contained in $\pi_e(G)$, we infer that $a_2$, $a_3$, ..., $a_p$ are also primes. We distinguish the following two cases:

\smallskip
\hspace{10mm}\noindent{\bf Case 1.} $p=3$

\noindent Then $\pi_e(G)=\{1,3,5,7,\dots,m\}$.

If $m\geq 41$ then $\left[\frac{m}{8}\right]\geq 2$, and thus there are three primes $p_1$, $p_2$ and $p_3$ such that
\begin{equation}
\left[\frac{m}{8}\right]<p_1<2\left[\frac{m}{8}\right]<p_2<4\left[\frac{m}{8}\right]<p_3<8\left[\frac{m}{8}\right]\leq m,\nonumber
\end{equation}by the well-known Bertrand's postulate. Since $G$ is solvable, Lemma 1.5 shows that there exists $a\in G$ with $o(a)\in\{p_1p_2,p_2p_3,p_3p_1\}$, which leads to\newpage
\begin{equation}
o(a)\geq p_1p_2>2\left[\frac{m}{8}\right]^2>m,\nonumber
\end{equation}a contradiction.

The cases $m\in\{7,9,\dots,39\}$ can be eliminated directly by using Lemma 1.5. For example, for $m=39$ we have $29,31,37\in\pi_e(G)$ and so there exists $a\in G$ with $o(a)\in\{29\cdot 31,31\cdot 37,37\cdot 29\}$, that is $o(a)>39$, a contradiction.

Consequently, $m\leq 5$, i.e. $\pi_e(G)\subseteq\{1,3,5\}$, showing that all non-trivial elements of $G$ are of prime order.

\smallskip
\hspace{10mm}\noindent{\bf Case 2.} $p\geq 5$

\noindent Let $q$ be the smallest prime not dividing $p-1$. Then $q<p$. On the other hand, it is well-known that an arithmetic progression of ratio $r$ cannot contain more consecutive prime terms than the value of the smallest prime that does not divide $r$. This shows that $p\leq q$. Thus, in this case we get a contradiction, completing the proof.
\end{proof}

We are now able to prove Theorem 1.1.

\medskip\noindent{\bf Proof of Theorem 1.1.} Let $G$ be a finite group of odd order such that $\pi_e(G)$ is an arithmetic progression. Then all non-trivial elements of $G$ are of prime order by Lemma 2.1, and therefore $G$ is either a $p$-group of exponent $p$ or a Frobenius group with kernel $P\in Syl_p(G)$ of exponent $p$ and complement $Q\in Syl_q(G)$ of order $q$ by Theorem 1.3. Clearly, in the second case we have $\pi_e(G)=\{1,p,q\}$, implying that $p=2q-1$ or $q=2p-1$, as desired.\qed

\section{Proof of Theorem 1.2}

We first present a lemma, whose proof is similar to that of Lemma 2.1.

\begin{lemma}
Let $G$ be a finite group of odd order such that $\pi_s(G)$ or $\pi_{as}(G)$ is an arithmetic progression. Then all non-trivial proper subgroups of $G$ are of prime order.
\end{lemma}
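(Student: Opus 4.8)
The plan is to imitate the proof of Lemma~2.1, with ``element order'' replaced throughout by ``(abelian) proper subgroup order''. Write $\mathcal{S}$ for whichever of $\pi_s(G)$, $\pi_{as}(G)$ is an arithmetic progression, say $\mathcal{S}=\{a_0=1,a_1,\dots,a_n\}$. If $|G|$ is prime then $G\cong C_p$ has no non-trivial proper subgroup and there is nothing to prove, so assume $|G|$ is not prime. By Cauchy's theorem $G$ has a subgroup of order $p$, the least prime dividing $|G|$, and this subgroup is proper (since $|G|$ is not prime) and cyclic, hence abelian; on the other hand every non-trivial proper subgroup has order divisible by a prime $\geq p$. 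Therefore $a_1=p$ and $a_i=i(p-1)+1$ for all $i$. The observation replacing ``every divisor of $d$ lies in $\pi_e(G)$'' is: \emph{if $d\in\mathcal{S}$ with $d>1$, then every prime divisor $\ell$ of $d$ lies in $\mathcal{S}$} --- indeed a proper (abelian) subgroup of order $d$ contains, by Cauchy, a copy of $C_\ell$, which is a proper abelian subgroup of $G$. As in Lemma~2.1 I would then distinguish $p=3$ and $p\geq 5$.

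For $p\geq 5$ one has $a_i=i(p-1)+1<p^2$ whenever $1\leq i\leq p$; so if some $a_i$ with $1\leq i\leq\min(n,p)$ were composite, it would have a prime divisor $\ell\leq\sqrt{a_i}<p$ lying in $\mathcal{S}$, contradicting $a_1=p=\min(\mathcal{S}\setminus\{1\})$. Hence $a_1,\dots,a_{\min(n,p)}$ are all prime. Let $q$ be the least prime not dividing $p-1$, so that $q<p$; since $p-1$ is invertible modulo $q$, if $n\geq q$ then one of the (prime) terms $a_1,\dots,a_q$ is divisible by $q$ and hence, being $\geq p>q$, composite --- a contradiction. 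Thus $n\leq q-1<p$, so $a_1,\dots,a_n$ are all prime, i.e. every non-trivial proper subgroup of $G$ has prime order.

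For $p=3$ we get $\mathcal{S}=\{1,3,5,7,\dots,m\}$, and Case~1 of Lemma~2.1 transfers essentially verbatim. If $m$ is large, Bertrand's postulate yields three primes $p_1<p_2<p_3\leq m$; these lie in $\mathcal{S}$, hence divide $|G|$, and satisfy $p_1p_2>m$. Since $G$ has odd order it is solvable, so Lemma~1.5 gives an element of order $p_ip_j$ for some $i\neq j$, generating a cyclic --- in particular abelian --- subgroup $C$ with $|C|=p_ip_j$. If $C\neq G$ then $p_ip_j\in\mathcal{S}$, contradicting $p_ip_j>m$; and $C=G$ is impossible, for then $|\mathcal{S}|=3$ whereas $m$ large forces $|\mathcal{S}|\geq 4$. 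The finitely many remaining values of $m$ are eliminated directly by Lemma~1.5, exactly as in Lemma~2.1, leaving $\mathcal{S}\subseteq\{1,3,5\}$; once more every non-trivial proper subgroup of $G$ has prime order.

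The Bertrand-plus-Lucido core is taken straight from Lemma~2.1, so the real work is in two smaller points, which I regard as the crux. First, the ``hereditary'' step must survive the restriction to abelian subgroups in the $\pi_{as}$ case; this is why one descends to a cyclic $C_\ell$, which lands in both $\pi_s(G)$ and $\pi_{as}(G)$ simultaneously, and likewise the subgroup $C$ produced from Lemma~1.5 is cyclic so that its order lands in whichever of the two sets we are assuming to be an arithmetic progression. Second, one must dispose of the borderline situation in which the element supplied by Lemma~1.5 generates all of $G$; this is handled by the cardinality count on $\mathcal{S}$ indicated above.
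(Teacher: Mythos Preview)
Your approach is precisely what the paper intends --- it offers no proof beyond ``similar to that of Lemma~2.1'' --- and your adaptation is careful on the two points you highlight (descending to cyclic $C_\ell$ so the hereditary step lands in $\pi_{as}(G)$ as well as $\pi_s(G)$, and disposing of the possibility $C=G$ via the cardinality of $\mathcal S$).

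There is, however, one genuine gap you do not flag. When $\mathcal S=\pi_{as}(G)$, proving that $\mathcal S\subseteq\{1\}\cup\{\text{primes}\}$ only says that every proper \emph{abelian} subgroup of $G$ has prime order; the lemma asserts this for \emph{every} proper subgroup. Your sentence ``i.e.\ every non-trivial proper subgroup of $G$ has prime order'' is immediate for $\mathcal S=\pi_s(G)$ but is an unjustified leap for $\mathcal S=\pi_{as}(G)$, and it is exactly the kind of abelian-vs-all issue you were alert to on the input side. The repair is short: take $H\le G$ a proper subgroup of minimal composite order; every proper subgroup of $H$ then has prime order, so $H$ is cyclic (hence abelian, contradiction) or minimal non-cyclic. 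By Theorem~1.4 with $|H|$ odd, either $H\cong C_p\times C_p$ (abelian, giving $p^2\in\pi_{as}(G)$, contradiction) or $H$ is non-abelian of order $pq$ with $q\mid p-1$. In Case~1 this forces $p,q\in\{3,5\}$, which admits no such pair; in Case~2 write $p=a_i$, so $q\mid p-1=i(a_1-1)$, and since $q\in\mathcal S$ gives $q\ge a_1>a_1-1$ while $i\le n<a_1\le q$, the prime $q$ divides neither factor --- contradiction.
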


This easily leads to a proof of Theorem 1.2.

\medskip\noindent{\bf Proof of Theorem 1.2.}
Let $G$ be a finite group of odd order such that $\pi_s(G)$ or $\pi_{as}(G)$ is an arithmetic progression. Then all non-trivial proper subgroups of $G$ are cyclic by Lemma 3.1. It follows that $G$ is either cyclic or minimal non-cyclic. In the first case we get $G\cong C_{p^i}$, $i=1,2$, or $G\cong C_{pq}$ with $p$ and $q$ distinct primes such that $p=2q-1$ or $q=2p-1$, while in the\newpage \noindent second one we get $G\cong C_p\times C_p$ by Theorem 1.4 (note that we cannot have $G\cong \langle a,b \mid a^p=b^{q^m}=1, b^{-1}ab=a^r \rangle$ because Lemma 3.1 would imply $m=1$, i.e. $G$ would be a non-abelian group of order $pq$, and so $\pi_s(G)=\pi_{as}(G)=\{1,p,q\}$; assuming $p<q$, we find $p\mid q-1$ and $q=2p-1$, leading to $p\mid 2p-2$, a contradiction). This completes the proof.\qed

\vspace*{3ex}\small

\hfill
\begin{minipage}[t]{5cm}
Marius T\u arn\u auceanu \\
Faculty of  Mathematics \\
``Al.I. Cuza'' University \\
Ia\c si, Romania \\
e-mail: {\tt tarnauc@uaic.ro}
\end{minipage}

\end{document}